\newdefinition{defn}{Definition}[section]
\newdefinition{nim}[defn]{}
\newdefinition{rem}[defn]{Remark}
\newdefinition{ex}[defn]{Example}
\newtheorem{thm}{Theorem}[section]
\newtheorem{lem}[thm]{Lemma}
\newtheorem{prop}[thm]{Proposition}
\newcommand{\Tr}{\mathrm{Tr}}
\newcommand{\Aut}{\mathrm{Aut}}
\newcommand{\Ker}{\mathrm{Ker}}
\newcommand{\Br}{\mathrm{Br}}
\begin{document}

\begin{frontmatter}



\title{The extended Brauer quotient of $N$-interior $G$-algebras}

\author{Tiberiu Cocone\c t\corref{cor1}}
\ead{tiberiu.coconet@math.ubbcluj.ro}
\address{Faculty of Economics and Business Administration, Babe\c s-Bolyai University, Str. Teodor Mihali, nr.58-60 , 400591 Cluj-Napoca, Romania}
\author{Constantin-Cosmin Todea}
\ead{Constantin.Todea@math.utcluj.ro}
\address{Department of Mathematics, Technical University of Cluj-Napoca, Str. G. Baritiu 25,
 Cluj-Napoca 400027, Romania}

\cortext[cor1]{Corresponding author} \fntext[secondadr]{The first author acknowledges the  support of  the Romanian
PN-II-ID-PCE-2012-4-0100 project}
\begin{abstract} In this note we generalize  the extended Brauer quotient defined by L. Puig and Y. Zhou in \cite[Section 3]{PuZh} to the case of $N$-interior $G$-algebras, where $N$ is a normal subgroup of a
finite group $G$. We use this extended Brauer quotient on
permutation algebras  to establish some correspondences that
generalize known results that hold in the group algebra case.

\end{abstract}

\begin{keyword}
$G$-algebras, points, Brauer constructions, defect groups.

\MSC 20C20
\end{keyword}

\end{frontmatter}


\section{Introduction}\label{sec1}
Throughout this paper  $N$ is a normal subgroup of a finite group
$G$, $p$ is a prime and  $k$ is the residue field of a discrete valuation ring $\mathcal{O}$, with characteristic $p$. We assume that $k$ is algebraically closed. All modules,
considered as left modules, and algebras are finitely
generated.  Recall that an   $N$-interior $G$-algebra $A$ is  a
$k$-algebra endowed with two group homomorphisms
$$\alpha:N\rightarrow A^{\times}$$ and
$$\phi:G\rightarrow \Aut_k(A),$$ where
$A^{\times}$ is the subgroup of invertible elements of $A$ and
$\Aut_k(A)$ is the group of $k$-algebra automorphisms of $A$. For
any $g\in G, n\in N, a\in A$ we set ${}^ga=\phi(g)(a),
{}^gn=gng^{-1}$. Moreover, we have  ${}^na=\alpha(n)a\alpha(n)^{-1}$.
Note that since $A$ is $N$-interior it carries   a $kN-kN$-bimodule
structure. The algebra $A$ serves for the construction of an
interior $G$-algebra
\[C:=A\otimes_NG,\] where for $a,b\in A$ and $x,y\in G,$   the product is given by
\[(a\otimes x)(b\otimes y)=a~{}^xb\otimes xy.\]

The extended Brauer quotient  for interior $G$-algebras was defined
and analyzed in \cite{PuZh} and in \cite{PuZhII}. It turns out that
this extended Brauer quotient is an important tool for generalizing
some results regarding basic Morita equivalences and basic Rickard
equivalences between  blocks of finite groups. Basic Morita and Rickard equivalences are intensively studied in \cite{PuLo}.
In \cite{PuZh} the
definition of the extended Brauer quotient is stated using any
subgroup of $\Aut(P),$ where $P$ is a $p$-subgroup of $G.$ For
technical reasons, that are clarified later  on this paper, we
restrict ourselves to the set
$$K:=\{\varphi\in\Aut(P)\mid \varphi(u)u^{-1}\in N,\forall u\in P\},$$
which is actually  a subgroup of $\Aut(P)$. If $\varphi\in K$ we
denote by $\Delta_{\varphi}:P\rightarrow P\times P$ the
$\varphi$-twisted diagonal, that is the group homomorphism defined
by $\Delta_{\varphi}(u)=(\varphi(u),u)$ for any $u\in P$.  For any
subgroup $T$ of $K$  the $T$-normalizer $N_G^T(P)$ of $P$ in $G$ is
the inverse image of $T$ through the canonical homomorphism
$N_G(P)\rightarrow \Aut(P)$.

 In Section \ref{sec2} we make necessary constructions for defining the
extended Brauer quotients of an $N$-interior $G$-algebra. Note that
Definition \ref{defnextBrauerquot}, although is seems similar to
that of \cite{PuZh},  yields an additional action of $N_G(P)$ on the
extended Brauer quotient. The mentioned action  is an important
ingredient used in  Section \ref{sec3}, since  we obtain two
analogous  correspondences similar to \cite[Proposition 3.3]{PuZh}.
Explicitly these are Theorem \ref{thmprop33} and Theorem \ref{thm}.
Also for a subalgebra of $C,$ we obtain in  Proposition \ref{thmAhat} a
relationship
between the extended Brauer quotient defined in \cite{PuZh}
and the one constructed in Section \ref{sec2}.

Our general assumptions and notations are standard. We refer the
readers to \cite{The} and \cite{Pu} for
Puig's theory of $G$-algebras and pointed groups.

\section{The Brauer quotient of $N$-interior $G$-algebras }\label{sec2}
We begin with some useful propositions. With the above notations we
have:
\begin{prop}\label{propADeltafhi} The $k$-algebra $A$ is a $k\Delta_{\varphi}(P)$-module via the action $$(\varphi(u),u)a:=\varphi(u)u^{-1}~{}^ua,$$
for any $u\in P, a\in A$.
\end{prop}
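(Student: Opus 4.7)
The plan is to verify directly the two axioms of a $k\Delta_{\varphi}(P)$-module for the prescribed action. First I would observe that $\Delta_{\varphi}: P \to P \times P$ is a group homomorphism (so $\Delta_{\varphi}(P)$ is indeed a subgroup of $P \times P$), and that the formula makes sense: since $\varphi \in K$, the element $\varphi(u)u^{-1}$ lies in $N$, so one should read ``$\varphi(u)u^{-1}\cdot {}^u a$'' as $\alpha(\varphi(u)u^{-1})\cdot{}^u a$, i.e.\ left multiplication in $A$ by the image of $\varphi(u)u^{-1}$ under $\alpha$, applied to the translated element ${}^u a = \phi(u)(a)$. It then suffices to check by $k$-linear extension that (i) $\Delta_{\varphi}(1_P)$ acts as the identity, and (ii) $\Delta_{\varphi}(uv)\cdot a = \Delta_{\varphi}(u)\cdot\bigl(\Delta_{\varphi}(v)\cdot a\bigr)$ for all $u,v\in P$ and $a\in A$.

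Axiom (i) is immediate from $\varphi(1)=1$, $\alpha(1)=1_A$, and $\phi(1)=\Id_A$. For axiom (ii) the computation of the right-hand side gives
\[
\alpha(\varphi(u)u^{-1})\cdot{}^u\!\bigl(\alpha(\varphi(v)v^{-1})\cdot {}^v a\bigr),
\]
and since $\phi(u)$ is a $k$-algebra automorphism this equals
\[
\alpha(\varphi(u)u^{-1})\cdot\phi(u)\bigl(\alpha(\varphi(v)v^{-1})\bigr)\cdot {}^{uv} a.
\]
To collapse this to $\alpha(\varphi(uv)(uv)^{-1})\cdot{}^{uv}a$, I would invoke the compatibility
\[
\phi(g)(\alpha(n)) = \alpha({}^g n) \qquad (g\in G,\ n\in N),
\]
which is built into the definition of an $N$-interior $G$-algebra. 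Applying it to $g=u$ and $n=\varphi(v)v^{-1}\in N$ turns $\phi(u)(\alpha(\varphi(v)v^{-1}))$ into $\alpha(u\varphi(v)v^{-1}u^{-1})$; multiplicativity of $\alpha$ and a cancellation then yield $\alpha\bigl(\varphi(u)\varphi(v)v^{-1}u^{-1}\bigr) = \alpha\bigl(\varphi(uv)(uv)^{-1}\bigr)$, matching the left-hand side.

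The main subtlety, which I would flag at the start of the argument, is precisely the use of the interchange law $\phi(g)\circ\alpha = \alpha\circ c_g$: this is the one place where both pieces of data of the $N$-interior $G$-algebra structure interact, and it is what allows $\Delta_{\varphi}(P)$ (rather than only the $P$-diagonal or only $P\times\{1\}$) to act. Everything else is a bookkeeping exercise in the group laws of $N$ and $P$, together with the hypothesis $\varphi(u)u^{-1}\in N$ that validates writing $\alpha(\varphi(u)u^{-1})$.
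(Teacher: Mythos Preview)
Your proposal is correct and follows essentially the same approach as the paper: both verify the module axioms directly, with the associativity check reducing to the identity $\varphi(u)u^{-1}\cdot{}^{u}(\varphi(v)v^{-1})=\varphi(uv)(uv)^{-1}$ inside $N$. The paper's proof is terser (it omits the identity axiom and leaves the compatibility $\phi(g)\circ\alpha=\alpha\circ c_g$ implicit in its notation), whereas you spell out that this interchange law is precisely the point where the $N$-interior $G$-algebra structure is used; but the underlying computation is the same.
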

\begin{proof} Let $u_1,u_2\in P, a\in A$. The following equalities are enough to prove the proposition
$$[(\varphi(u_1),u_1)(\varphi(u_2),u_2)]a=\varphi(u_1u_2)u_2^{-1}u_1^{-1}({}^{u_1u_2}a);$$
$$(\varphi(u_1),u_1)[(\varphi(u_2),u_2)a]=(\varphi(u_1),u_1)[\varphi(u_2)u_2^{-1}~{}^{u_2}a]=\varphi(u_1)u_1^{-1}[{}^{u_1}(\varphi(u_2)u_2^{-1}~{}^{u_2}a)]=$$
$$\varphi(u_1)u_1^{-1}u_1\varphi(u_2)u_1^{-1}[{}^{u_1}(u_2^{-1})]({}^{u_1u_2}a)=\varphi(u_1u_2)u_2^{-1}u_1^{-1}({}^{u_1u_2}a).$$
\end{proof}
We denote the \emph{$\varphi$-normalizer of $P$ in $A$} by
$$N_A^{\varphi}(P):=\{a\in A\mid (\varphi(u),u)a=a, \forall u\in P\}.$$
Proposition \ref{propADeltafhi} implies that this set is the
$k$-subspace of $\Delta_{\varphi}(P)$-fixed elements, also denoted
$A^{\Delta_{\varphi}(P)}$. As usual, we may consider the  Brauer
map, which is the canonical surjection
$$\Br_{\Delta_{\varphi}(P)}^A:N_A^{\varphi}(P)\rightarrow N_A^{\varphi}(P)/(\sum_{R<P}\Tr_{\Delta_{\varphi}(R)}^{\Delta_{\varphi}(P)}(N_A^{\varphi}(R))),$$
where $\Tr_{\Delta_{\varphi}(R)}^{\Delta_{\varphi}(P)}$ denotes the
\emph{relative trace map}, see \cite[2.5]{PuZh} for more details. We
define an external direct sum
$$N_A^K(P):=\bigoplus_{\varphi\in K}N_A^{\varphi}(P).$$

\begin{prop}\label{propdefnNAK} With the above notations the following hold.
\begin{itemize}
\item[i)] For any  $\varphi,\psi\in K$ we have  $N_A^{\psi}(P)N_A^{\varphi}(P)\subseteq
N_A^{\psi\circ\varphi}(P)$;
\item[ii)] The group $N_G(P)$ acts on  $K$ in the following way. If  $(x,\varphi)\in N_G(P)\times K$ then $(x,\varphi)\mapsto {}^x\varphi$, where ${}^x\varphi:P\rightarrow P$ with
$${}^x\varphi(u):=x\varphi(x^{-1}ux)x^{-1}$$ for any $u\in P$;
\item[iii)] $N_N^K(P)$ is a normal subgroup of $N_G(P)$, hence the external direct sum  $N_A^K(P)$ is an $N_N^K(P)$-interior $N_G(P)$-algebra.
\end{itemize}
\end{prop}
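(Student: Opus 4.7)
For part (i), I would pass to the interior $G$-algebra $C = A\otimes_N G$ defined in the excerpt, where the twisted action of Proposition \ref{propADeltafhi} on $A \cong A\otimes 1 \subseteq C$ becomes ordinary conjugation: applying the relation $a\otimes n = a\alpha(n)\otimes 1$ with $n = \varphi(u)u^{-1} \in N$ verifies $(\varphi(u),u)\cdot a = (1\otimes\varphi(u))(a\otimes 1)(1\otimes u^{-1})$ inside $C$. For $a \in N_A^{\psi}(P)$ and $b \in N_A^{\varphi}(P)$, the key step is to insert $(1\otimes\varphi(u)^{-1})(1\otimes\varphi(u)) = 1$ between $a$ and $b$ in the expression $(1\otimes\psi\varphi(u))(ab\otimes 1)(1\otimes u^{-1})$. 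This factors the expression as a product of $(1\otimes\psi(\varphi(u)))(a\otimes 1)(1\otimes\varphi(u)^{-1})$ and $(1\otimes\varphi(u))(b\otimes 1)(1\otimes u^{-1})$; the first equals $a\otimes 1$ by the $\psi$-invariance of $a$ applied at $v = \varphi(u)\in P$, the second equals $b\otimes 1$ by the $\varphi$-invariance of $b$, and the product is $ab\otimes 1$, establishing $ab \in N_A^{\psi\circ\varphi}(P)$.

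Part (ii) is a routine verification. Since $x \in N_G(P)$, conjugation by $x$ restricts to an automorphism of $P$, so ${}^x\varphi$ is a composition of automorphisms of $P$. To see ${}^x\varphi \in K$, write $v = x^{-1}ux \in P$; then ${}^x\varphi(u)u^{-1} = x(\varphi(v)v^{-1})x^{-1}$, which lies in $N$ by $\varphi \in K$ and the normality of $N$ in $G$. The identity ${}^1\varphi = \varphi$ and the composition rule ${}^x({}^y\varphi) = {}^{xy}\varphi$ follow by direct expansion.

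For part (iii), I first observe that $N_N^K(P) = N_N(P)$: for any $n \in N_N(P)$ the induced automorphism of $P$ sends $u \mapsto nun^{-1}$, and $nun^{-1}u^{-1} = n(un^{-1}u^{-1}) \in N$ automatically by normality, so the $K$-condition is free. Thus $N_N^K(P) = N\cap N_G(P)$, which is normal in $N_G(P)$ by the standard argument. To equip $N_A^K(P) = \bigoplus_{\varphi\in K} N_A^{\varphi}(P)$ with an $N_G(P)$-action, I let $x \in N_G(P)$ act componentwise via $\phi(x)$, sending $N_A^{\varphi}(P)$ to $N_A^{{}^x\varphi}(P)$. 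Well-definedness follows by applying the compatibility $\phi(x)(\alpha(n)) = \alpha(xnx^{-1})$ of the $N$-interior $G$-algebra structure to the $\varphi$-invariance equation for $a$: one obtains precisely the ${}^x\varphi$-invariance of $\phi(x)(a)$. The $N_N(P)$-interior structure is $\alpha$ restricted to $N_N(P)$; a short direct check shows that for each such $n$, $\alpha(n)$ lies in $N_A^{\chi}(P) \subseteq N_A^K(P)$, where $\chi$ is the conjugation-automorphism of $P$ induced by $n$, and all remaining compatibilities are inherited from the corresponding ones in $A$.

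The principal obstacle is part (i): carrying out the cancellation purely inside $A$ requires careful tracking of how $\alpha$-factors fail to commute with arbitrary elements, whereas the embedding in $C$ makes the identity a transparent consequence of inserting a factor of $1$ between $a$ and $b$.
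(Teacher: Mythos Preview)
Your proof is correct. The only real methodological difference from the paper is in part~(i): the paper carries out the verification entirely inside $A$, expanding
\[
(\psi\circ\varphi)(u)u^{-1}\,{}^{u}(ab)
\]
by inserting $\varphi(u)^{-1}\varphi(u)$ and $u\varphi(u^{-1})\varphi(u)u^{-1}$ at the right places and then invoking the $\psi$- and $\varphi$-invariance of $a$ and $b$ one after the other. Your approach of embedding $A$ into $C=A\otimes_N G$ so that the twisted $\Delta_{\varphi}(P)$-action becomes honest two-sided multiplication by $1\otimes\varphi(u)$ and $1\otimes u^{-1}$ is a genuinely cleaner reformulation: the ``careful tracking of $\alpha$-factors'' that the paper performs by hand is absorbed into the associativity of $C$, and the whole argument reduces to inserting $(1\otimes\varphi(u)^{-1})(1\otimes\varphi(u))$. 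The trade-off is that you rely on the injectivity of $a\mapsto a\otimes 1$ (clear since $kG$ is free over $kN$) and on the standard compatibility $\phi(g)(\alpha(n))=\alpha({}^{g}n)$, whereas the paper's bare-hands computation stays self-contained.

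For parts~(ii) and~(iii) your argument and the paper's are essentially identical direct checks. Your observation that $N_N^K(P)=N_N(P)$ (the $K$-condition is automatic for elements of $N$) is not stated explicitly in the paper's proof but is correct and makes the normality claim in~(iii) immediate; the paper instead proves ${}^xa\in N_A^{{}^x\varphi}(P)$ by the same substitution $u'=x^{-1}ux$ that you use.
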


\begin{proof}  i) Let $a\in N_A^{\psi}(P)$, $b\in N_A^{\varphi}(P)$ and $u\in P$. Then $\psi(u)u^{-1}~{}^ua=a$ and $\varphi(u)u^{-1}~{}^ub=b$.
Then \begin{align*}(\psi\circ
\varphi)(u)u^{-1}[{}^u(ab)]=&\psi(\varphi(u))u^{-1}({}^ua~{}^ub)\\=&\psi(\varphi(u))\varphi(u)^{-1}\varphi(u)u^{-1}({}^ua)u\varphi(u^{-1})\varphi(u)u^{-1}({}^ub)\\
=&\psi(\varphi(u))\varphi(u^{-1})[{}^{\varphi(u)u^{-1}}({}^ua)]b=\psi(\varphi(u))\varphi(u)^{-1}{}(^{\varphi(u)}a)b=ab.\end{align*}
ii) If $x\in N_G(P)$ then ${}^x\varphi$ is clearly in $\Aut(P).$
Since for any $u\in P$ we have $$x^{-1}u^{-1}x\varphi(x^{-1}ux)\in
N,$$ which is equivalent to $u^{-1}~{}^x\varphi(u)\in N,$ we obtain that ${}^x\varphi$
is in $K.$

iii) Statement i) gives a $k$-algebra structure on $N_A^K(P).$ So it
is enough to prove that for any $x\in N_G(P)$ and $a\in
N_A^{\varphi}(P)$ the element  ${}^xa$ belongs to
$N_A^{{}^x\varphi}(P),$ since this yields that $N_A^K(P)$ is an
$N_N^K(P)$-interior $N_G(P)$-algebra. For any $u\in P$  there is
$u'\in P$ such that $x^{-1}ux=u'$. We have
\begin{align*}{}^x\varphi(u)u^{-1}~{}^u({}^xa)&=x\varphi(x^{-1}ux)x^{-1}u^{-1}xx^{-1}({}^{ux}a)\\
&=x\varphi(u')u'^{-1}x^{-1}({}^{xu'}a)={}^x(\varphi(u')u'^{-1}~{}^{u'}a)={}^xa.\end{align*}
\end{proof}

\begin{lem}\label{lem} Let $R,R'\leq P$ and $\varphi,\varphi'\in K$. If
 $a=\Tr_{\Delta_{\varphi}(R)}^{\Delta_{\varphi}(P)}(c)$ and $a'=\Tr_{\Delta_{\varphi'}(R')}^{\Delta_{\varphi'}(P)}(c')$ then $aa'=\Tr_{\Delta_{\varphi\circ\varphi'}(R')}^{\Delta_{\varphi\circ\varphi'}(P)}(ac')=\Tr_{\Delta_{\varphi\circ\varphi'}(R')}^{\Delta_{\varphi\circ\varphi'}(P)}(ca')$.
\end{lem}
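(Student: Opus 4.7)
The plan is to prove both equalities by a twisted version of the standard projection formula $\Tr_H^G(xb)=\Tr_H^G(x)\cdot b$ valid for $G$-fixed $b$. I would first check that the relative traces on the right are well-defined: since the proof of Proposition~\ref{propdefnNAK}(i) carries over verbatim when $P$ is replaced by any of its subgroups, and since $N_A^{\varphi}(P)\subseteq N_A^{\varphi}(R')$, one has $ac'\in N_A^{\varphi\circ\varphi'}(R')$; symmetrically $ca'\in N_A^{\varphi\circ\varphi'}(R)$.

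The core of the proof is the pointwise identity
\[((\varphi\circ\varphi')(y),y)\cdot(ac')=a\cdot((\varphi'(y),y)\cdot c'),\qquad y\in P.\]
Summing this over a set $y\in[P/R']$ of coset representatives converts $\Tr_{\Delta_{\varphi\circ\varphi'}(R')}^{\Delta_{\varphi\circ\varphi'}(P)}(ac')$ into $a\cdot\Tr_{\Delta_{\varphi'}(R')}^{\Delta_{\varphi'}(P)}(c')=aa'$, which yields the first equality.

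To verify the pointwise identity, I would unfold both sides via the action formula of Proposition~\ref{propADeltafhi} and use the invariance $a=(\varphi(y),y)\cdot a$ to absorb the $\varphi(y)y^{-1}$-twist; what remains is the commutation
\[a\cdot\alpha(n)=\alpha(\varphi(n))\cdot a,\qquad n:=\varphi'(y)y^{-1}.\]
The crucial observation is that $n\in N\cap P$: one has $n\in N$ because $\varphi'\in K$, and $n\in P$ because $\varphi'(y),y\in P$. Membership in $P$ guarantees that $\varphi(n)\in P$ is defined, while membership in $N$ gives $\phi(n)(a)=\alpha(n)a\alpha(n)^{-1}$. Feeding $u=n$ into the identity $a=\alpha(\varphi(u)u^{-1})\phi(u)(a)$ expressing $a\in N_A^{\varphi}(P)$ then produces $a=\alpha(\varphi(n)n^{-1})\phi(n)(a)=\alpha(\varphi(n))\,a\,\alpha(n)^{-1}$, which is exactly the required commutation.

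The second equality is obtained by the mirror-image argument, swapping the roles of $a$ and $a'$ and using the $\Delta_{\varphi'}(P)$-invariance of $a'$ in place of $a$'s, with cosets now taken over the appropriate side. The main obstacle is the bookkeeping needed to isolate the key membership $\varphi'(y)y^{-1}\in N\cap P$ (and symmetrically $\varphi(x)x^{-1}\in N\cap P$); once this is in hand, the entire identity collapses to a one-line consequence of the defining invariance of $a$ or $a'$, and the summation over cosets completes the proof.
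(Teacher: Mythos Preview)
Your proof is correct and follows essentially the same route as the paper: both expand the trace defining $a'$, use the $\Delta_{\varphi}(P)$-invariance of $a$ term by term to rewrite $a\cdot\bigl(\varphi'(u)u^{-1}\,{}^{u}c'\bigr)$ as $(\varphi\circ\varphi')(u)\,u^{-1}\,{}^{u}(ac')$, and then recognise the resulting sum as the $\Delta_{\varphi\circ\varphi'}$-trace over $[P/R']$. The only cosmetic difference is that the paper invokes the invariance of $a$ once, at the element $\varphi'(u)\in P$, while you factor the same step into two applications (at $y$ and at $n=\varphi'(y)y^{-1}\in N\cap P$), isolating the commutation $a\,\alpha(n)=\alpha(\varphi(n))\,a$; the substance is identical, and your emphasis on $n\in N\cap P$ makes explicit exactly the hypothesis $\varphi'\in K$ that drives the computation.
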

\begin{proof}
We know that
$$a'=\sum_{(\varphi'(u),u)\in \Delta_{\varphi'}(P)/\Delta_{\varphi'}(R')}\varphi'(u)u^{-1}~{}^u(c').$$
The following equalities hold
\begin{align*}aa'&=\sum_{(\varphi'(u),u)\in \Delta_{\varphi'}(P)/\Delta_{\varphi'}(R')}a\varphi'(u)u^{-1}~{}^u(c')\\&=
\sum_{(\varphi'(u),u)\in \Delta_{\varphi'}(P)/\Delta_{\varphi'}(R')}\varphi(\varphi'(u))\varphi'(u^{-1})({}^{\varphi'(u)}a)\varphi'(u)u^{-1}~{}^u(c')\\
&=\sum_{(\varphi'(u),u)\in \Delta_{\varphi'}(P)/\Delta_{\varphi'}(R')}\varphi(\varphi'(u))u^{-1}u\varphi'(u^{-1})({}^{\varphi'(u)}a)\varphi'(u)u^{-1}~{^u}(c')\\
&=\sum_{(\varphi'(u),u)\in \Delta_{\varphi'}(P)/\Delta_{\varphi'}(R')}\varphi(\varphi'(u))u^{-1}[{}^{u\varphi'(u^{-1})}({}^{\varphi'(u)}a)]~{}^u(c')\\
&=\sum_{(\varphi(\varphi'(u)),u)\in \Delta_{\varphi\circ\varphi'}(P)/\Delta_{\varphi\circ\varphi'}(R')}\varphi(\varphi'(u))u^{-1}~{}^u(ac')\\
&=\Tr_{\Delta_{\varphi\circ\varphi'}(R)}^{\Delta_{\varphi\circ\varphi'}(P)}(ac').
\end{align*}
Similar computations give  the second equality from the conclusion
of the lemma.
\end{proof}

Note that if $R\neq P$ or $R'\neq P$  Lemma \ref{lem} implies that
$aa'\in \Ker (\Br_{\Delta_{\varphi\circ\varphi'}(P)}^A)$, hence
 $\bigoplus_{\varphi\in K}\Ker (\Br_{\Delta_{\varphi}(P)}^A)$ is a two-sided ideal of $N_A^K(P)$. This allows us  to consider the following definition.
\begin{defn}\label{defnextBrauerquot} The \emph{extended Brauer quotient}  is the following  $N_N^K(P)$-interior $N_G(P)$-algebra
$$\overline{N}_A^K(P):=N_A^K(P)/(\bigoplus_{\varphi\in K}\Ker (\Br_{\Delta_{\varphi}(P)}^A).$$
\end{defn}

The following two results contain some useful properties of the
extended Brauer quotient for some $N$-interior $G$-algebras. For
more details regarding $N$-interior $G$-algebras see \cite{Pu}. The
proof of the next proposition is similar to \cite[Proposition
3.4]{PuZh} and is left as an exercise for the reader.
\begin{prop}\label{propcompmorhism} Let $A,B$ be two $N$-interior $G$-algebras and $f:A\rightarrow B$ a homomorphism of $N$-interior $G$-algebras.
Then $f$ induces two  homomorphisms of $N_N^K(P)$-interior
$N_G(P)$-algebras $N_f^K:N_A^K(P)\rightarrow N_B^K(P)$ and
$\overline{N}_f^K:\overline{N}_A^K(P)\rightarrow\overline{N}_B^K(P)$.
Moreover if $f$ is an embedding then $N_f^K$ and $\overline{N}_f^K$
are embeddings too.
\end{prop}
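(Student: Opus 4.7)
The plan is to construct $N_f^K$ componentwise over $\varphi\in K$, descend to the quotient, and finally upgrade to an embedding.

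First, I would fix $\varphi\in K$ and verify that $f$ restricts to a $k$-linear map $f^\varphi\colon N_A^\varphi(P)\to N_B^\varphi(P)$: applying $f$ to the defining identity $\alpha_A(\varphi(u)u^{-1})\cdot{}^u a=a$ and invoking the fact that $f$ is $N$-interior and $G$-equivariant yields the same relation for $f(a)$. Summing over $\varphi$ produces $N_f^K:=\bigoplus_\varphi f^\varphi$. Its multiplicativity is immediate from the $K$-graded product law of Proposition \ref{propdefnNAK}(i) together with the ring-homomorphism property of $f$; the $N_N^K(P)$-interior and $N_G(P)$-equivariance properties pass through from $f$ with the aid of Proposition \ref{propdefnNAK}(iii), which tracks the exchange $\varphi\mapsto{}^x\varphi$.

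To descend to the quotient, I would note that the ideal $\bigoplus_\varphi\Ker(\Br^A_{\Delta_\varphi(P)})$ is generated by relative traces $\Tr_{\Delta_\varphi(R)}^{\Delta_\varphi(P)}(c)$ with $R<P$. Because $f$ is $\Delta_\varphi(P)$-equivariant it commutes with these relative traces, so the image of such a generator is again a relative trace from a proper subgroup in $B$. Hence $N_f^K$ sends the $A$-Brauer ideal into its $B$-counterpart and descends to $\overline{N}_f^K$.

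Suppose now that $f$ is an embedding and set $e:=f(1_A)$, so $f(A)=eBe$. Two auxiliary facts drive the embedding argument: ${}^u e=e$ for every $u\in P$ (from $G$-equivariance and ${}^u 1_A=1_A$), and $e$ commutes with $\alpha_B(N)$ (from $N$-interiority and ${}^n 1_A=1_A$). I would then check componentwise that $f^\varphi(N_A^\varphi(R))=e\,N_B^\varphi(R)\,e$ for every $R\leq P$: the inclusion $\subseteq$ is the identity $f(a)=ef(a)e$, while for $\supseteq$ one uses $eBe=f(A)$ to pull any $b\in eN_B^\varphi(R)e$ back uniquely to some $a\in A$ and then transfers its $\varphi$-invariance through $f$ by injectivity.

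The main obstacle is to establish that $\overline{N}_f^K$ remains injective, which amounts to showing that the preimage under $N_f^K$ of the $B$-Brauer ideal is exactly the $A$-Brauer ideal. Given $N_f^K(x)=\sum_R \Tr_{\Delta_\varphi(R)}^{\Delta_\varphi(P)}(c_R)$ with $R<P$ and $c_R\in N_B^\varphi(R)$, I would sandwich by $e$: using that $e$ commutes with $\alpha_B(N)$ and is $P$-fixed, one obtains $e\cdot\Tr_{\Delta_\varphi(R)}^{\Delta_\varphi(P)}(c_R)\cdot e=\Tr_{\Delta_\varphi(R)}^{\Delta_\varphi(P)}(ec_Re)$. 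The level-$R$ embedding established above identifies $ec_Re$ with $f(c'_R)$ for a unique $c'_R\in N_A^\varphi(R)$, and injectivity of $f$ finally forces $x=\sum_R \Tr_{\Delta_\varphi(R)}^{\Delta_\varphi(P)}(c'_R)$. Consequently $\overline{N}_f^K$ is injective and its image is $\bar e\,\overline{N}_B^K(P)\,\bar e$, where $\bar e$ is the class of $e$, completing the embedding statement.
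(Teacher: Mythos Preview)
Your argument is correct and complete. The paper itself does not supply a proof of this proposition; it simply remarks that the argument is analogous to \cite[Proposition~3.4]{PuZh} and leaves the verification to the reader. What you have written is exactly the expected adaptation of that argument to the $N$-interior $G$-algebra setting: the componentwise restriction over $\varphi\in K$, the compatibility with relative traces to pass to the quotient, and the ``sandwich by $e$'' trick for the embedding claim are precisely the ingredients one extracts from Puig--Zhou. The only points worth double-checking are the ones you already flagged---that $e=f(1_A)$ is $P$-fixed and centralizes $\alpha_B(N)$---and you handled both correctly using $G$-equivariance and $N$-interiority of $f$.
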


Let $K'$ denote the group of interior automorphisms of $P$ induced
by conjugation with the elements of $N$. Clearly $K'$  is a subgroup
of $K$.
\begin{prop}\label{propgroupalgextbr} Let $A=kN$ as an $N$-interior $G$-algebra. There exists an isomorphism of $N_N(P)$-interior
 $N_G(P)$-algebras
$$\rho^{K'}:kN_N(P)\rightarrow \overline{N}_A^{K'}(P).$$
\end{prop}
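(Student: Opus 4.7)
The plan is to define $\rho^{K'}$ on the basis $N_N(P)$ of $kN_N(P)$ by sending a group element $m$ to the image of $m$, viewed as a unit of $A = kN$, in the $\varphi_m$-graded component of $\overline{N}_A^{K'}(P)$, where $\varphi_m(u) = mum^{-1}$ is the inner automorphism of $P$ attached to $m \in N_N(P)$.

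The crucial preparatory step is to identify the graded pieces of $N_A^{K'}(P)$. For each $n \in N_N(P)$, I claim that $N_A^{\varphi_n}(P) = n \cdot C_{kN}(P)$. To see this, substitute $a = nb$ for an unknown $b \in kN$ in the fixed-point condition $\varphi_n(u)u^{-1}\,{}^u a = a$ from Proposition~\ref{propADeltafhi}; after cancellation the condition collapses to $ub = bu$ for every $u \in P$. The same substitution gives $N_A^{\varphi_n}(R) = n\cdot C_{kN}(R)$ for each $R \leq P$, and a short expansion of the trace formula of Lemma~\ref{lem} yields
\[\Tr_{\Delta_{\varphi_n}(R)}^{\Delta_{\varphi_n}(P)}(nb) = n \cdot \Tr_R^P(b),\]
where $\Tr_R^P$ is the ordinary relative trace inside the group algebra $kN$. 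Consequently the $\varphi_n$-component of $\overline{N}_A^{K'}(P)$ is identified with $n \cdot kC_N(P)$, whose natural $k$-basis is the image of the coset $nC_N(P) \subseteq N$.

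With this identification in place, $\rho^{K'}(m) := \overline{m}$ is well-defined in the $\varphi_m$-component, because $m = m \cdot 1$ and $1 \in C_N(P)$. Multiplicativity follows from Proposition~\ref{propdefnNAK}(i) together with the easy identity $\varphi_m \circ \varphi_{m'} = \varphi_{mm'}$, which places the product $mm'$ in the correct graded piece. Preservation of the $N_N(P)$-interior structure is immediate from the construction, while preservation of the $N_G(P)$-action comes from the identity ${}^g\varphi_n = \varphi_{gng^{-1}}$, giving ${}^g\overline{m} = \overline{gmg^{-1}} = \rho^{K'}({}^g m)$.

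Finally, bijectivity will reduce to a dimension count together with the basis description above. Since $K' \cong N_N(P)/C_N(P)$, each graded piece has dimension $|C_N(P)|$ and there are $|N_N(P)/C_N(P)|$ of them, so $\dim_k \overline{N}_A^{K'}(P) = |N_N(P)| = \dim_k kN_N(P)$; moreover $\rho^{K'}$ sends the basis $N_N(P)$ onto the union of the bases of all graded pieces. I expect the main technical hurdle to be the identification $N_A^{\varphi_n}(P) = nC_{kN}(P)$ together with the companion trace computation: both amount to careful book-keeping of the twisted action on $A = kN$, but once they are in hand the remaining checks are essentially formal.
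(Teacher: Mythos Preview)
Your proposal is correct and follows essentially the same route as the paper: define $\rho^{K'}$ by sending $m\in N_N(P)$ to its class in the $\varphi_m$-graded piece, use the $K'$-grading $kN_N(P)=\bigoplus_{\varphi\in K'}kC_N(P)x_\varphi$, and check compatibility with the algebra and $N_G(P)$-structures. The only real difference is in the final step: the paper invokes \cite[Proposition 2.12]{Dad} to conclude that the graded map is an isomorphism, whereas you make this self-contained by proving the equality $N_A^{\varphi_n}(P)=n\cdot (kN)^P$ (not just the inclusion $kC_N(P)x_\varphi\subseteq A^{\Delta_\varphi(P)}$ that the paper records), reducing the twisted Brauer quotient to the classical one $(kN)(P)\cong kC_N(P)$, and finishing with a dimension count. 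Your version is slightly longer but avoids the external reference; both arguments are valid.
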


\begin{proof}   It is known that the groups $K'$ and $N_N(P)/C_N(P)$ are isomorphic. Definition \ref{defnextBrauerquot} assures
the existence of an algebra  homomorphism
\[kN_N(P)\to \bar{N}_A^{K'}(P),\] which is actually a homomorphism
of  $K'$-graded algebras since there is a bijection between $K'$ and
any set of representatives of $N_N(P)/C_N(P).$ Indeed,  if
$x_{\varphi}\mapsto \varphi$ denotes the mentioned bijection we have
$$kN_N(P)=\bigoplus_{\varphi\in K'}kC_N(P)x_{\varphi},$$
which is an $N_N(P)$-interior algebra such that
$$kC_N(P)x_{\varphi}\subseteq A^{\Delta_{\varphi}(P)},$$ for any $x_{\varphi}.$
Now, denoting by $\rho_{\varphi}^{K'}$ the restriction on
$kC_N(P)x_{\varphi}$ of the homomorphism of modules
$$\Br_{\Delta_{\varphi}(P)}^A:A^{\Delta_{\varphi}(P)}\rightarrow A(\Delta_{\varphi}(P)),$$
we see that, by \cite[Proposition 2.12]{Dad},
$\rho^{K'}:=\oplus_{\varphi\in K'}\rho_{\varphi}^{K'}$ is  an
$N_G(P)$-algebra isomorphism.

\end{proof}
\section{The extended Brauer quotient correspondences}\label{sec3}
We begin this section with the following theorem which may be
regarded as an extension of  \cite[Proposition 3.3]{PuZh}.
\begin{thm}\label{thmprop33} With the above notation we assume that $P\leq N_G^K(P)$ and that $P\cap N\neq \{1\}$. If $A$ is an $N$-interior $p$-permutation $G$-algebra which is projective viewed as $k(N\times 1)$-module and as
$k(1\times N)$-module, we have
$$\overline{N}_A^K(P)^P\subset A(P)+J(k(P\cap N))\overline{N}_A^K(P).$$
Moreover, for any subgroup $H$ of $G$ that contains $P$ there is a
bijective correspondence between the set of pointed groups
$H_{\beta}$ on $A$ with defect group $P$ and the set of pointed
groups $N_H(P)_{\widehat{\beta}}$ on $\overline{N}_A^K(P)$ with
defect group $P$ such that  $\Br_P(\beta)\subset \widehat{\beta}$.
Further, if $P$ is normal in $H,$  setting $$\mathcal{Q}=\{Q\leq
H\mid Q \mbox{ is a $p$-subgroup such that  } P\leq Q \},$$ then the
above bijection restricts to a bijection that preserves the defect
groups in $\mathcal{Q}.$
\end{thm}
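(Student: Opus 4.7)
The plan is to adapt the scheme of the proof of \cite[Proposition 3.3]{PuZh} to the $N$-interior setting, exploiting the $K$-grading of $\overline{N}_A^K(P)$ and the $N_G(P)$-action constructed in Section \ref{sec2}. I treat the three assertions in order: the inclusion of $P$-fixed elements, the bijection of pointed groups, and the defect-group preservation when $P$ is normal in $H$. Standard lifting of idempotents modulo a nilpotent ideal bridges the first assertion to the second.

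\textbf{The inclusion.} Decompose $\overline{N}_A^K(P)=\bigoplus_{\varphi\in K}\overline{N}_A^\varphi(P)$. For $\varphi=\Id$ the action of $\Delta_{\Id}(P)$ reduces to ordinary $P$-conjugation, so $\overline{N}_A^{\Id}(P)=A(P)$. The $N_G(P)$-action permutes the summands along $P$-orbits of $K$, hence the $\varphi$-component of any $P$-fixed element vanishes unless $\varphi$ is $P$-stable. For a $P$-stable $\varphi\neq\Id$ and $a\in\overline{N}_A^\varphi(P)^P$, combining the $\Delta_\varphi(P)$-fixedness $\varphi(u)u^{-1}\cdot{}^u a=a$ with ${}^u a=a$ from $N_G(P)$-fixedness yields $\varphi(u)u^{-1}\cdot a=a$ for every $u\in P$. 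By Proposition \ref{propdefnNAK}, left multiplication by $\varphi(u)u^{-1}\in P\cap N$ shifts $K$-degree by $\iota_{\varphi(u)u^{-1}}$; matching gradings forces $\varphi(u)u^{-1}\in Z(P)$ for every $u$, so the elements $\varphi(u)u^{-1}$ generate a nontrivial subgroup $M\leq Z(P)\cap P\cap N$ that annihilates $a$ under left multiplication. Projectivity of $A$ as a $k(N\times 1)$-module restricts to projectivity over $kM$, whence $A^M=N_M\cdot A$ with $N_M=\sum_{m\in M}m\in J(k(P\cap N))$; passing to the Brauer quotient gives $a\in J(k(P\cap N))\overline{N}_A^K(P)$.

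\textbf{Correspondence and defect groups.} The map $\Br_P^A$ composed with the inclusion $A(P)\hookrightarrow\overline{N}_A^K(P)$ defines a homomorphism $A^{N_H(P)}\to\overline{N}_A^K(P)^{N_H(P)}$ of interior algebras. By the previous step, its image modulo the nilpotent ideal $J(k(P\cap N))\overline{N}_A^K(P)^{N_H(P)}$ lies in $A(P)^{N_H(P)}$; standard lifting of idempotents then transports each primitive $i\in\beta\subset A^H$ with $\Br_P(i)\neq 0$ to a primitive idempotent of $\overline{N}_A^K(P)^{N_H(P)}$, defining a unique point $\widehat\beta$ satisfying $\Br_P(\beta)\subset\widehat\beta$; composing with the classical Brauer correspondence on $A(P)$ furnishes the inverse. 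For the last clause, when $P\trianglelefteq H$ every $\Br_P$-nonzero point of $A^H$ has defect group containing $P$, hence lying in $\mathcal{Q}$, and running the same argument with arbitrary $Q\in\mathcal{Q}$ in place of $P$ yields defect preservation through the bijection. The main obstacle lies in the first step: one must verify that the projectivity of $A$ genuinely descends through the Brauer quotient to yield $a=N_M\cdot b$ \emph{inside} $\overline{N}_A^K(P)$ (not only inside $A$), which requires the compatibility of $\Br_{\Delta_\varphi(P)}^A$ with the left $kM$-action exploited above.
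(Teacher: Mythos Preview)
Your overall strategy matches the paper's---both follow \cite[Proposition 3.3]{PuZh} and use idempotent lifting across the decomposition $\overline{N}_A^K(P)=A(P)\oplus\bigoplus_{\varphi\neq\Id}\overline{N}_A^\varphi(P)$---but your execution of the inclusion step contains a genuine error and the gap you flag at the end is not minor.

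The assertion ``the $\varphi$-component of any $P$-fixed element vanishes unless $\varphi$ is $P$-stable'' is false. The $N_G(P)$-action of $u\in P$ sends $\overline{N}_A^\varphi(P)$ to $\overline{N}_A^{{}^u\varphi}(P)$, so a $P$-fixed element supported on a $P$-orbit $\{\varphi_1,\dots,\varphi_r\}$ in $K$ with $r>1$ simply has its components cyclically permuted: if $a_{\varphi_1}$ is fixed by the stabiliser $P_{\varphi_1}$, then $\sum_i{}^{u_i}a_{\varphi_1}$ is $P$-fixed with every component nonzero. Your subsequent argument (combining $\varphi(u)u^{-1}\,{}^ua=a$ with ${}^ua=a$) therefore applies only to $P$-stable $\varphi$, and the remaining orbits are left untreated. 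The paper, following \cite{PuZh}, does not pass through this reduction at all: it argues uniformly for every $\varphi\neq\Id$ that the $H$-fixed part of $\overline{N}_A^\varphi(P)$ already lies in $J(k(P\cap N))\overline{N}_A^\varphi(P)$, obtaining the stronger decomposition
\[
\overline{N}_A^K(P)^H=A(P)^H\oplus\Bigl(\bigoplus_{\varphi\neq\Id}J(k(P\cap N))\overline{N}_A^\varphi(P)\Bigr)^H,
\]
from which both the inclusion and the defect-group statements follow directly via $\Br_P(A_Q^H)=A(P)_Q^H$.

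The obstacle you isolate at the end---that projectivity of $A$ over $kM$ gives $A^M=N_M\cdot A$ in $A$, not in the quotient $\overline{N}_A^\varphi(P)$---is exactly the point where the argument needs more than you have supplied. Compatibility of $\Br_{\Delta_\varphi(P)}^A$ with the left $kM$-action (which does hold, since $M\leq Z(P)\cap N$ commutes with the $\Delta_\varphi(P)$-action) is necessary but not sufficient: one still needs the quotient to be free over $kM$, and quotients of free $kM$-modules need not be free. The route in \cite{PuZh} avoids this by working with the $p$-permutation structure to control $N_A^\varphi(P)$ and its relative traces directly, rather than by invoking abstract projectivity after passing to the Brauer quotient. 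Your sketch of the correspondence and of defect preservation is otherwise in line with the paper's.
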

\begin{proof} Similar arguments as the ones   proving \cite[(3.1.1)]{PuZh} give
$$J(k(P\cap N))\overline{N}_A^K(P)=\overline{N}_A^K(P)J(k(P\cap N))\subset J(\overline{N}_A^K(P)).$$
Following the same steps as in the proof of  \cite[Proposition
3.3]{PuZh} we obtain

\[\overline{N}_A^K(P)^H=A(P)^H\bigoplus \left(\bigoplus_{\varphi\in K\setminus \{\mathrm{id}_P\}}J(k(P\cap N))\overline{N}_A^{\varphi}(P)\right)^H,\]
for any subgroup $H$ contained in $N_G(P).$ So that any point of $H$
on $\overline{N}_A^K(P)$ is actually the
$(\overline{N}_A^K(P)^H)^{\times}$-conjugacy class of a primitive
idempotent belonging to $A(P)^H.$ Further, if $P\leq Q\leq H\leq
N_G(P),$ the inclusion
\[A(P)_Q^H\subseteq \overline{N}_A^K(P)^H_Q=A(P)^H_Q\bigoplus \left(\bigoplus_{\varphi\in K\setminus \{\mathrm{id}_P\}}J(k(P\cap N))\overline{N}_A^{\varphi}(P)\right)^H_Q\]
shows that the above bijection preserves the defect group $Q.$

Now let $H$ denote any subgroup of $G$ containing $P.$ Since
$$\Br_P(A^H_P)=A(P)^{N_H(P)}_P,$$ by lifting
idempotents and  the above bijection, the first assertion follows.
Next, if $P$ is normal in $H$, for any $p$-subgroup $Q$ which
verifies $P\leq Q\leq H$ we have
$$\Br_P(A^H_Q)=A(P)^{H}_Q,$$ thus the second assertion holds.
\end{proof}

 We denote by $\bar{P}$ the group $PN/N$ and define the group
 \[N_G(\bar{P}):=\{g\in G\mid g\bar{P}g^{-1}=\bar{P}\},\] due to
 the  action of $G$ on the quotient $G/N.$

\begin{thm}\label{thm}
Let $A$ be an $N$-interior  permutation $G$-algebra having an
$N\times N$-stable basis as $k$-module such that it is projective
viewed as $k(N\times 1)$-module and as $k(1\times N)$-module. Let
$H$ be any subgroup of $G$ such that $P\leq H$ and set
$D:=A\otimes_N N_G(\bar{P}).$ Then there is a bijective
correspondence preserving $P$ as defect group between the points of
$H$ on $C$ and the points of $N_H(P)$ on
$\bar{N}^{K}_{D}(P).$ Further, if $P$ is normal in $H$ then
the above bijection preserves the defect groups in
$$\mathcal{Q}=\{Q\leq H\mid Q
\mbox{ is a $p$-subgroup such that  } P\leq Q \}.$$
\end{thm}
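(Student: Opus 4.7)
The strategy is to deduce Theorem \ref{thm} by reducing it to \cite[Proposition 3.3]{PuZh} applied to the interior $G$-algebra $C=A\otimes_N G$, and then translating the extended Brauer quotient appearing there into $\overline{N}_D^K(P)$ by means of Proposition \ref{thmAhat}. As a first step I would check that the hypotheses on $A$ propagate to $C$ as needed by \cite[Proposition 3.3]{PuZh}: combining the $N\times N$-stable basis of $A$ with a transversal of $N$ in $G$ yields a $G$-stable $k$-basis of $C$, so $C$ is a $p$-permutation interior $G$-algebra, and the two-sided $kN$-projectivity of $A$ guarantees that Brauer constructions on $C$ behave as required. Invoking \cite[Proposition 3.3]{PuZh} then supplies a bijective, defect-group-preserving correspondence between the points of $H$ on $C$ with defect group $P$ and the points of $N_H(P)$ on the extended Brauer quotient of $C$ constructed in \cite{PuZh}, with $\Br_P(\beta)\subset\widehat{\beta}$.

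Next I would invoke Proposition \ref{thmAhat} to identify that extended Brauer quotient of $C$ with $\overline{N}_D^K(P)$ as $N_G(P)$-algebras. The intuition is that for any $\varphi\in\Aut(P)$ the $\Delta_\varphi(P)$-fixed part of $C=\bigoplus_{gN\in G/N}A\otimes g$ is concentrated in the summands indexed by cosets $gN$ with $g\in N_G(\bar P)$; moreover, the automorphisms of $P$ that can actually occur with a nonzero contribution are precisely the elements of $K$, since they are the ones satisfying $\varphi(u)u^{-1}\in N$ for all $u\in P$. Consequently the extended Brauer quotient of $C$ is realised by the $N$-interior $N_G(\bar P)$-subalgebra $D$, which is exactly the content we need from Proposition \ref{thmAhat}. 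Composing the two bijections then delivers the first assertion of the theorem.

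For the refinement when $P$ is normal in $H$, I would follow the pattern used at the end of the proof of Theorem \ref{thmprop33}. Using the $\varphi$-graded decomposition of $\overline{N}_D^K(P)$ together with the fact that the components for $\varphi\neq\mathrm{id}_P$ are absorbed into the Jacobson radical (via Lemma \ref{lem} and the discussion after it), any point of $N_H(P)$ on $\overline{N}_D^K(P)$ is represented by a primitive idempotent in $A(P)^{N_H(P)}$; normality of $P$ in $H$ then gives $\Br_P(A^H_Q)=A(P)^H_Q$ for every $p$-subgroup $Q$ with $P\leq Q\leq H$, so lifting idempotents and the standard relative trace argument transports defect groups in $\mathcal{Q}$ across the correspondence. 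The main obstacle lies in the identification step through Proposition \ref{thmAhat}: one must show that the isomorphism respects not merely the algebra structure but the full $N_H(P)$-interior structure and the relative trace filtration, since otherwise defect groups would not be readable on both sides in a compatible way; once this is established, everything else is a bookkeeping assembly of ingredients already proved.
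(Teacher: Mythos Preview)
Your proposed route has a genuine gap at the identification step. Proposition~\ref{thmAhat} does not say what you need: it concerns the algebra $E=A\otimes_N N_G^T(P)N$ and the group $T\subseteq\Aut_G(P)$, and its conclusion is $\overline{N}_E^T(P)\cong \overline{N}_A^T(P)\otimes_{N_N^T(P)}N_G^T(P)$. None of $C$, $D$, or the group $K$ appear in that statement, so it cannot be invoked to identify the Puig--Zhou extended Brauer quotient of $C$ with $\overline{N}_D^K(P)$. (It also sits logically after Theorem~\ref{thm} in the paper, so is not available as input.) The ``intuition'' you describe, that $\Delta_\varphi(P)$-fixed elements of $C$ live in the $N_G(\bar P)$-part, is essentially the statement $C(P)=D(P)$ for the \emph{ordinary} Brauer quotient, not an identification of extended Brauer quotients; upgrading it to the extended version is precisely the work that is missing.

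The paper does not factor the argument through \cite[Proposition~3.3]{PuZh} applied to $C$ at all. It works directly inside $D$: from an $N_G(\bar P)\times N_G(\bar P)$-stable basis of $D$ one checks that if a $P\times P$-basis element $\tilde a$ lies in $N_D^\varphi(P)$ and $R\leq P$ is its stabiliser, then the equation ${}^u\tilde a=u\varphi(u)^{-1}\tilde a$ forces $\varphi(u)=u$ for all $u\in R$. Hence any $\Tr_R^P(\tilde a)$ with $\varphi\neq\mathrm{id}_P$ lies in the Jacobson radical of $N_D^K(P)$, and one obtains
\[
\Bigl(\bigoplus_{\varphi\neq\mathrm{id}_P}\overline{N}_D^{\varphi}(P)\Bigr)^P
=\Bigl(\bigoplus_{\varphi\neq\mathrm{id}_P}J(k(N\cap P))\,\overline{N}_D^{\varphi}(P)\Bigr)^P
\subseteq J\bigl(\overline{N}_D^K(P)\bigr).
\]
Together with $C(P)=D(P)$ (which links the correspondence back to $C$), one then follows the scheme of \cite[Proposition~3.3]{PuZh} and of Theorem~\ref{thmprop33}. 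Your last paragraph on the case $P\trianglelefteq H$ is in the right spirit, but it only kicks in once this radical inclusion for $D$ is established by the direct basis argument rather than by an identification you do not have.
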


\begin{proof} Let $B$ denote a basis of $A$ which is $G$-stable and $N\times N$-stable.
It is easy to verify that the set
\[ \{b\otimes x\mid b\in B  \mbox{ and } x\in [N_G(\bar{P})/N]\}\]
is a $N_G(\bar{P})\times N_G(\bar{P})$-stable basis of $D.$
If $\tilde{a}\in N^{\varphi}_{D}(P)$ is in a $P\times
P$-stable basis, we have $\tilde{a}=\sum b\otimes x,$ for some $b$
in $B$ and $x\in [N_G(\bar{P})/N].$  If $R$ is the subgroup
of $P$ fixing $\tilde{a},$ for any $u\in R$ we have
\[\tilde{a}={}^u\tilde{a}=u\varphi(u)^{-1}\tilde{a},\]
which is equivalent to
\[b\otimes x=u\varphi(u)^{-1} b\otimes x,\]
for any pair $b\otimes x$ appearing in the decomposition of
$\tilde{a},$ thus $\varphi(u)=u.$ Also note that
$C(P)=D(P)$ as $N_N^K(P)$-interior
$N_G\overline{}(P)$-algebra, see \cite[(9.5.4)]{Pu}. It follows that
$\Tr_R^P(\tilde{a})$ belongs to $J(N^K_{D}(P))$ if $\varphi$
is not the unity element of $K.$ At this point, the surjective
homomorphism
$$N^K_{D}(P)\to \bar{N}^K_{D}(P)$$
of permutation $N_N^K(P)$-interior $N_G(P)$-algebras forces
$$\left(\bigoplus_{\varphi\in K\setminus \{\mathrm{id_P}\}}\bar{N}^{\varphi}_{D}(P)\right)^P=
\left(\bigoplus_{\varphi\in K\setminus \{\mathrm{id_P}\}} J(k(N\cap
P))\bar{N}^{\varphi}_{D}(P)\right)^P\subseteq
J(\bar{N}^K_{D}(P)).$$ To complete the proof is suffices to
follow the steps of \cite[Proposition 3.3]{PuZh} and of Theorem
\ref{thmprop33}.
\end{proof}
\begin{rem} Let $A=kN$ as an  $N$-interior $G$-algebra and let $H=G$. In this case Theorem \ref{thm} gives a bijective correspondence between blocks of $kG$ with defect groups $P$ and primitive idempotents of $kN_G^K(P)^{N_G(P)}$ with defect groups $P$. Moreover since $kN_G^K(P)^{N_G(P)}$ is a subalgebra in the center of $kN_G(P)$, and this inclusion preserves defect groups we obtain the First Main Theorem of Brauer.
\end{rem}
Let $\Aut_G(P)$ be the group of interior automorphisms of $P$
induced by conjugation with elements from $G$. We denote by $T$ the
following subgroup
$$T:=\{\varphi\in\Aut_G(P)\mid \varphi(u)u^{-1}\in N,\forall u\in P\}.$$
We define the following $N_G^T(P)$-interior algebra (and also
$N_G(P)$-algebra in this case)
$$E:=A\otimes_{N}N_G^T(P)N,$$
with the multiplication given as before Theorem \ref{thm}.
The extended Brauer quotient of this algebra can be given more explicitly.
\begin{prop}\label{thmAhat} With the above notations we have
$$\overline{N}_{E}^T(P)=\overline{N}_A^T(P)\otimes_{N_N^T(P)}N_G^T(P),$$
as $N^T_G(P)$-interior $N_G(P)$-algebras.
\end{prop}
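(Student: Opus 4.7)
The plan is to construct a $k$-linear isomorphism $\Phi\colon \overline{N}_A^T(P)\otimes_{N_N^T(P)} N_G^T(P) \to \overline{N}_{E}^T(P)$ induced by $\overline{a}\otimes x\mapsto \overline{a\otimes x}$, and verify that it respects all relevant structures. The building block is the identity $1\otimes x\in N_{E}^{c_x}(P)$ for every $x\in N_G^T(P)$, where $c_x$ denotes the conjugation automorphism of $P$ induced by $x$. Indeed, by Proposition~\ref{propADeltafhi} applied to $E$,
\[(c_x(u),u)\cdot(1\otimes x)=c_x(u)u^{-1}\otimes uxu^{-1},\]
and using $uxu^{-1}=[u,x]\cdot x$ with $[u,x]=uxu^{-1}x^{-1}\in N$, one pulls $[u,x]$ across the tensor to obtain $c_x(u)u^{-1}\cdot[u,x]\otimes x = 1\otimes x$ (since $xux^{-1}u^{-1}\cdot uxu^{-1}x^{-1}=1$). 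Combined with Proposition~\ref{propdefnNAK}(i), this yields $(a\otimes 1)(1\otimes x)=a\otimes x\in N_{E}^{\varphi\circ c_x}(P)$ for every $a\in N_A^\varphi(P)$ and $x\in N_G^T(P)$.

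The second step is the coset decomposition
\[N_{E}^\psi(P)=\bigoplus_{\overline{x}\in N_G^T(P)/N_N^T(P)} N_A^{\psi\circ c_x^{-1}}(P)\cdot(1\otimes x),\qquad\psi\in T.\]
The $P$-conjugation action on the cosets $N_G^T(P)/N_N^T(P)$ is trivial because $[u,x]\in P\cap N\subseteq N_N^T(P)$ for every $u\in P$ and $x\in N_G^T(P)$; hence the $\Delta_\psi(P)$-action on $E=\bigoplus_{\overline{x}} A\otimes x$ stabilizes each summand. Inside each summand, right multiplication by $1\otimes x^{-1}\in N_{E}^{c_x^{-1}}(P)$ yields, via Proposition~\ref{propdefnNAK}(i), a $k$-linear bijection $N_{E}^\psi(P)\cap(A\otimes x)\to N_{E}^{\psi\circ c_x^{-1}}(P)\cap(A\otimes 1)$, which via the embedding $a\mapsto a\otimes 1$ is identified with $N_A^{\psi\circ c_x^{-1}}(P)$. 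Reindexing $\psi=\varphi\circ c_x$, the map $\widetilde\Phi\colon N_A^T(P)\otimes_{N_N^T(P)}N_G^T(P)\to N_E^T(P)$, $a\otimes x\mapsto (a\otimes 1)(1\otimes x)$, is a $k$-linear bijection, and multiplicativity with compatibility with the $N_G^T(P)$-interior and $N_G(P)$-algebra structures follows at once from $(a\otimes 1)(1\otimes x)(a'\otimes 1)(1\otimes x')=a\cdot{}^x a'\otimes xx'$.

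The third step is to descend $\widetilde\Phi$ to the extended Brauer quotient. Lemma~\ref{lem} gives that if $a=\Tr_{\Delta_\varphi(R)}^{\Delta_\varphi(P)}(c)$ in $A$ with $R<P$, then $(a\otimes 1)(1\otimes x)=\Tr_{\Delta_{\varphi\circ c_x}(R)}^{\Delta_{\varphi\circ c_x}(P)}(c\otimes x)$ in $E$, so $\widetilde\Phi$ carries $(\bigoplus_\varphi\ker\Br_{\Delta_\varphi(P)}^A)\otimes_{N_N^T(P)}N_G^T(P)$ into $\bigoplus_\psi\ker\Br_{\Delta_\psi(P)}^E$. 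For the converse, any $\tilde a\otimes 1\in\ker\Br_{\Delta_\varphi(P)}^{E}\cap(A\otimes 1)$ is a sum of traces $\Tr_{\Delta_\varphi(R)}^{\Delta_\varphi(P)}(\tilde c_R)$ over proper $R<P$; writing $\tilde c_R=\sum_{\overline{y}}d_{y,R}\otimes y$ according to the coset decomposition for $R$ in place of $P$, and projecting onto the $y=1$ summand (which is stable under both the action and the trace) gives $\tilde a=\sum_R\Tr_{\Delta_\varphi(R)}^{\Delta_\varphi(P)}(d_{1,R})$ with $d_{1,R}\in N_A^\varphi(R)$. Hence $\tilde a\in\ker\Br_{\Delta_\varphi(P)}^A$, which together with the $\overline{x}$-direction bijection proves the matching of kernels.

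The main obstacle is the coset decomposition in the second step: it relies crucially on $T\subseteq\Aut_G(P)$, so that $c_x^{-1}\in T$ and the commutator $[u,x]$ lies in $N_N^T(P)$, to obtain both the triviality of the $P$-action on $N_G^T(P)/N_N^T(P)$ and the precise identification of $\Delta_\psi(P)$-fixed elements in each summand via the invertibility of $1\otimes x$ in $E$. The well-definedness over $N_N^T(P)$, multiplicativity, and $N_G(P)$-equivariance are all direct from the tensor formulas.
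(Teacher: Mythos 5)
Your proposal is correct and follows essentially the same route as the paper: both establish the coset decomposition $N_E^{\psi}(P)=\bigoplus_{\overline{x}}N_A^{\psi\circ c_{x^{-1}}}(P)\otimes x$ (you via multiplication by the units $1\otimes x\in N_E^{c_x}(P)$, the paper via a direct fixed-point computation on homogeneous elements) and then match the kernels of the Brauer maps through the compatibility of relative traces with $-\otimes x$. Your treatment of the converse kernel inclusion by projecting onto the $y=1$ summand is slightly more explicit than the paper's "this forces the requested equality," but the substance is identical.
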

\begin{proof}
Let $\varphi\in T$. Then there is $z^{-1}\in N_G^T(P)$ such that
$\varphi(u)={}^{z^{-1}}u$ for any $u\in P$. First we characterize
elements from $N_{E}^{\varphi}(P)$. Let $a\otimes x$ be a
homogenous element from $E$ which is in
$N_{E}^{\varphi}(P)$ where $x\in N_G^T(P).$  The definition of
$E$ as $N_G^T(P)$-interior algebra assures us that the next
statement is true
\begin{equation}\label{eq1}
{}^{\varphi(u)}a\otimes \varphi(u)xu^{-1}=a\otimes x.
\end{equation}
We denote by $u_1$ the element $xux^{-1}\in P$, hence
$xu^{-1}=u_1^{-1}x$. Denote by $\psi\in T$ the automorphism of $P$
induced by the conjugation with $xz$. It is trivial to check that
$\varphi(u)=\psi^{-1}(u_1)$. With the above notations, from
(\ref{eq1}) we obtain
\begin{equation}\label{eq2}{}^{\varphi(u)}a\otimes \varphi(u)xu^{-1}={}^{\varphi(u)}a\otimes \varphi(u)u_1^{-1}x={}^{\psi^{-1}(u_1)}a\otimes \psi^{-1}(u_1)u_1^{-1}x=a\otimes x.
\end{equation}
The last equality is equivalent to ${}^{\psi^{-1}(u_1)}a
\psi^{-1}(u_1)u_1^{-1}=a$, that is $a\in N_A^{\psi^{-1}}(P)$. Let
$c_{x^{-1}}\in\Aut_G(P)$ be the automorphism induced by the
conjugation with $x^{-1}\in N_G^T(P)$. With this notation we have
$\psi^{-1}=\varphi\circ c_{x^{-1}}$. From the above the following
equalities are true
\begin{align*}\bigoplus_{\varphi\in T}N_{E}^{\varphi}(P)&=\bigoplus_{\varphi\in T}\left(\bigoplus_{x\in[N_G^T(P)/N_N^T(P)]}N_A^{\varphi\circ c_{x^{-1}}}(P)\otimes x\right)\\
&=\bigoplus_{x\in[N_G^T(P)/N_N^T(P)]}\left(\bigoplus_{\varphi\in T}N_A^{\varphi\circ c_{x^{-1}}}(P)\otimes x\right)\\
&=\bigoplus_{x\in [N_G^T(P)/N_N^T(P)]}N_A^T(P)\otimes x.
\end{align*}
To complete the proof it is enough to prove that $\Tr_{\Delta_{\varphi}(Q)}^{\Delta_{\varphi}(P)}(a\otimes x)=\Tr_{\Delta_{\varphi}(x^{-1}Qx)}^{\Delta_{\varphi}(P)}(a)\otimes x$ for any $Q<P$, since this forces the requested equality. We have
\begin{align*}\Tr_{\Delta_{\varphi}(Q)}^{\Delta_{\varphi}(P)}(a\otimes x)&=\sum_{(\varphi(u),u)\in[\Delta_{\varphi}(P)/ \Delta_{\varphi}(Q)]}{}^{\varphi(u)}a\otimes\varphi(u)u^{-1}~{}^ux\\
&=\sum_{(\psi^{-1}(u_1), u_1)\in[\Delta_{\varphi}(P)/ \Delta_{\varphi}(x^{-1}Qx)]}{}^{\psi^{-1}(u_1)}a\otimes\psi^{-1}(u_1)u_1^{-1}x\\
&=\sum_{(\psi^{-1}(u_1), u_1)\in[\Delta_{\varphi}(P)/ \Delta_{\varphi}(x^{-1}Qx)]}{}^{\psi^{-1}(u_1)}a\psi^{-1}(u_1)u_1^{-1}\otimes x\\
&=\Tr_{\Delta_{\varphi}(x^{-1}Qx)}^{\Delta_{\varphi}(P)}(a)\otimes
x.
\end{align*}
For the second equality we used the first part of (\ref{eq2}).
\end{proof}






\end{document}